\newtheorem{thm}{Theorem}
\newtheorem{lem}{Lemma}
\newtheorem{exa}{Example}
\newtheorem{rem}{Remark}
\newtheorem*{TA}{Theorem A}
\newtheorem*{TB}{Theorem B}
\begin{document}

\title{On the trace approximations of products
of Toeplitz matrices\footnote{Published in Statistics and Probability Letters: 83(2013)753-760.}}

\author{M. S. Ginovyan\footnote{M. Ginovyan research
was partially supported by National Science Foundation
Grant \#DMS-0706786}\footnote{Department of Mathematics and Statistics,
Boston University, e-mail: ginovyan@math.bu.edu}
\, and A. A. Sahakyan\footnote{Department of Mathematics and Mechanics,
Yerevan State University, e-mail: sart@ysu.am}}
\date{}

\maketitle

\begin{abstract}
The paper establishes error orders for integral
limit approximations to the traces of products of
Toeplitz matrices generated by integrable real symmetric
functions defined on the unit circle.
These approximations and the corresponding error bounds are
of importance in the statistical analysis of discrete-time
stationary processes: asymptotic distributions and large
deviations of Toeplitz type random quadratic forms, estimation of
the spectral parameters and functionals, etc.
\end{abstract}

\vskip3mm
\noindent
{\bf Key words.} Toeplitz matrix, Trace approximation, Error bound,
Stationary process,\\ Spectral density.


\section{Introduction}

Toeplitz matrices, which have great independent interest and a
wide range of applications in different fields of science
(economics, engineering, finance, hydrology, physics, etc.),
arise naturally in the statistical analysis of
stationary processes -
the covariance matrix of a discrete-time stationary process
is a Toeplitz matrix generated by the spectral density
of that process, and vice versa, any non-negative summable function
generates a Toeplitz matrix, which can be considered as a spectral
density of some discrete-time stationary process, and therefore
the corresponding Toeplitz matrix will be the covariance
matrix of that process.

The present paper is devoted to the problem of approximation of
the traces of products of Toeplitz matrices generated
by integrable real symmetric functions defined on the unit circle,
and estimation of the corresponding errors.

The trace approximation problem and its applications in the
statistical analysis and prediction of discrete-time stationary
processes go back to the classical monograph by \cite{GS}.
Later this problem for different classes of generating functions (symbols)
has been considered by many authors (see, e.g., \cite{R2},  \cite{I},
\cite{Ta}, \cite{FT1}, \cite{A}, \cite{G0}, \cite{Dah},
\cite{GSu}, \cite{TK}, \cite{LP}, \cite{GS1}, \cite{GKSu}, and references therein).
Notice that the trace approximation problem is of particular importance
in the cases where the symbols of the underlying Toeplitz matrices
have singularities. For instance,
such cases arise in many problems of statistical analysis
(asymptotic distributions and large deviations of Toeplitz type random
quadratic forms, estimation of the spectral parameters and functionals, etc.)
of long-memory (the spectral density is unbounded) and anti-persistent
(the spectral density has zeros) discrete-time stationary processes
(see, e.g., \cite{IH1}, \cite{FT1}, \cite{A}, \cite{Dah}, \cite{GSu}, \cite{BGR},
\cite{TK}, \cite{LP}, \cite{GS1}), \cite{GKSu}).

The paper is organized as follows.
In the remainder of this section we review and summarize some known
results concerning trace approximation problem.
In Section 2 we state the main results of the paper
and discuss two examples.
Section 3 is devoted to the proofs of results stated in Section 2.

Throughout the paper the letters $C$, $c$ and $M$, with or without index,
are used to denote positive constants, the values of which can vary from line to line. Also, all functions considered in this paper are assumed to be $2\pi$-periodic and periodically extended to $\mathbb{R}$.

Let $f(\lambda)$ and $g(\lambda)$ be integrable real symmetric functions
defined on
$\mathbb{T}:\, =[-\pi, \pi]$, and let $T_n(f)$ and $T_n(g)$ be the $(n\times n)$
Toeplitz matrices generated by functions $f(\lambda)$ and $g(\lambda)$,
respectively: for $u(\lambda)\in L^1(\mathbb{T})$ we define
\begin{equation}
\label{in-1}
T_n(u)=\|\widehat u(k-j)\|_{k,j=\overline{1,n}},
\quad n=1, 2,\ldots,
\end{equation}
where
\begin{equation}
\label{in-02}
\widehat u(k)=\int_\mathbb{T} e^{i\lambda k}\,u(\lambda)\,d\lambda,
\quad k\in \mathbb{Z}
\end{equation}
are the Fourier coefficients of $u(\lambda)$.

Let $\nu$ be an arbitrary fixed positive integer. Define
\begin{equation}
\label{in-2} S_{n,\nu}: \, =S_{n,\nu}(f,g)=\frac1n\hbox{tr}[T_n(f)T_n(g)]^\nu,
\end{equation}
\begin{equation}
\label{in-3}
M_\nu: \, = M_\nu(f,g)=(2\pi)^{2\nu-1}\int_{-\pi}^{\pi}[f(\lambda)g(\lambda)]^\nu\,d\lambda
\end{equation}
and set
\begin{equation}
\label{in-4} \Delta_{n,\nu}: \, =\Delta_{n,\nu}(f,g)=|S_{n,\nu}-M_\nu|.
\end{equation}

The problem is to approximate $S_{n,\nu}$ by $M_{\nu}$ and estimate
the error rate for $\Delta_{n,\nu}$.
More precisely, find conditions on functions $f(\lambda)$ and $g(\lambda)$
such that:
\begin{eqnarray}
\label{im-4}
&&{\rm (A):} \quad \Delta_{n,\nu}(f,g)=o(1) \quad {\rm as} \quad n\to\infty, \quad{\rm or}\\
\label{im-5}
&&{\rm (B):} \, \quad \Delta_{n,\nu}(f,g)=O(n^{-\gamma}) \quad {\rm for \,\,some } \quad \gamma>0
\quad {\rm as} \quad n\to\infty.
\end{eqnarray}

In Theorems A and B below we summarize some known results
concerning Problems (A) and (B), respectively.

\begin{TA}
\label{TA}
Each of the following conditions is sufficient for
$$
\Delta_{n,2}(f,g)=|S_{n,2}-M_2|=o(1) \quad {\rm as} \quad n\to\infty.
$$

\begin{itemize}
\item[{\bf (A1)}]
$f(\lambda)\in L^p(\mathbb T)$ $(p\ge1)$ and  $g(\lambda)\in L^q(\mathbb T)$ $(q\ge1)$
with $1/p+1/q\le 1/2$.

\item[{\bf (A2)}]
$f\in  L^2(\mathbb T)$, \,$g\in  L^2(\mathbb T)$, $fg\in L^2(\mathbb T)$ and
\begin{equation}
\label{in-7} \int_{\mathbb T}
 f^2(\lambda)g^2(\lambda-\mu)\,d\lambda \longrightarrow
\int_{\mathbb T} f^2(\lambda)g^2(\lambda)\,d\lambda
\quad {\rm as} \quad \mu\to0.
\end{equation}

\item[{\bf (A3)}]
The function
\begin{equation}\label{in-6}
\varphi({\bf u})=\varphi(u_1, u_2,u_3)=\int_\mathbb T
f(\lambda)g(\lambda-u_1)f(\lambda-u_2)g(\lambda-u_3)\,d\lambda,\quad
\end{equation}
belongs to $L^2(\mathbb T^3)$ and is continuous at ${\bf 0}=(0,0,0)$.

\item[{\bf (A4)}]
$f(\lambda)\le |\lambda|^{-\alpha}L_1(\lambda)$ and
$|g(\lambda)|\le |\lambda|^{-\beta}L_2(\lambda)$ for
$\lambda\in {\mathbb T}$ and some
$\alpha<1,\ \beta<1,$ $\alpha+\beta\le1/2,$ and
$
L_i\in SV(\mathbb{R}), $
$\lambda^{-(\alpha+\beta)}L_i(\lambda)\in L^2(\mathbb{T}),$ \ $i=1,2,$
where $SV(\mathbb{R})$ is the class of slowly varying
at zero functions $u(\lambda)$, $\lambda\in\mathbb{R}$ satisfying
$u(\lambda)\in L^\infty(\mathbb{R}),$\
$\lim_{\lambda\to0}u(\lambda)=0,$ \
$u(\lambda)=u(-\lambda)$ and $0<u(\lambda)<u(\mu)$\ for\ $0<\lambda<\mu.$
\end{itemize}
\end{TA}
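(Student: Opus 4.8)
The plan is to reduce every case to a single integral representation together with a localization principle for products of Dirichlet kernels. Put $D_n(t)=\sum_{k=1}^{n}e^{ikt}$. Expanding $\mathrm{tr}\,[T_n(f)T_n(g)]^{2}$ over its four matrix indices, inserting the Fourier representation \eqref{in-02}, changing the four spectral variables to $(\lambda,\lambda-u_1,\lambda-u_2,\lambda-u_3)$ and integrating in $\lambda$, one obtains
\[
S_{n,2}=\frac1n\int_{\mathbb T^{3}}\varphi(u_1,u_2,u_3)\,K_n(u_1,u_2,u_3)\,du,\qquad K_n(u):=D_n(-u_1)D_n(u_1-u_2)D_n(u_2-u_3)D_n(u_3),
\]
with $\varphi$ the function in \eqref{in-6}. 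Only the diagonal terms $k_1=k_2=k_3=k_4$ survive integration, so $\int_{\mathbb T^{3}}K_n\,du=(2\pi)^{3}n$, while $M_{2}=(2\pi)^{3}\varphi(\mathbf 0)$; hence one has \emph{exactly} $S_{n,2}-M_{2}=\frac1n\int_{\mathbb T^{3}}[\varphi(u)-\varphi(\mathbf 0)]\,K_n(u)\,du$. Everything therefore follows from the \emph{Localization Lemma:} if $\varphi\in L^{2}(\mathbb T^{3})$ and $\varphi$ is continuous at $\mathbf 0$, then $\frac1n\int_{\mathbb T^{3}}[\varphi(u)-\varphi(\mathbf 0)]K_n(u)\,du\to0$. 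This lemma is precisely hypothesis \textbf{(A3)}, so it remains to prove it and to bring \textbf{(A1)}, \textbf{(A2)}, \textbf{(A4)} into its range.

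For the lemma I would expand $K_n$ in its finite Fourier series: the coefficient of $e^{im\cdot u}$ is a simplex-type count $N_n(m)=\#\{(k_j)\in\{1,\dots,n\}^{4}:k_2-k_1=m_1,\ k_3-k_2=m_2,\ k_4-k_3=m_3\}$, so $0\le N_n(m)\le n$, $N_n(m)=0$ once $\|m\|$ exceeds a fixed multiple of $n$, and $N_n(m)/n\to1$ for each fixed $m$; by Parseval, with $\psi:=\varphi-\varphi(\mathbf 0)$, $\frac1n\int\psi K_n=(2\pi)^{3}\sum_{m}\frac{N_n(m)}{n}\,\widehat\psi(m)$. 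The multiplier $N_n(m)/n$ is a three-dimensional Fej\'er-type (triangular) multiplier of width $n$, so $\frac1n K_n$ behaves \emph{almost} like convolution with an approximate identity; the genuine difficulty is that $K_n$ is a \emph{product of Dirichlet}, not Fej\'er, kernels, hence not bounded in $L^{1}(\mathbb T^{3})$ uniformly in $n$ (its $L^{1}$-norm grows like $n(\log n)^{3}$), so no approximate-identity theorem applies directly. I would split the frequency sum at a level $\|m\|\le N$: the low-frequency part is a finite sum in which $N_n(m)/n\to1$, and continuity of $\varphi$ at $\mathbf 0$ forces the limit $0$ (this is also the trigonometric-polynomial case of the theorem, where $\Delta_{n,2}=O(1/n)$); the high-frequency part is bounded using $0\le N_n(m)/n\le1$ together with its decay and Cauchy--Schwarz against the $\ell^{2}$-tail $\bigl(\sum_{\|m\|>N}|\widehat\psi(m)|^{2}\bigr)^{1/2}$, which is small exactly because $\varphi\in L^{2}(\mathbb T^{3})$. \emph{This estimate is the principal difficulty of the argument.}

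For \textbf{(A1)} the most transparent route is by polynomial approximation at the matrix level. First check $\Delta_{n,2}\to0$ for trigonometric polynomials $f,g$, an elementary banded-Toeplitz computation with error $O(1/n)$. For general $f\in L^{p}$, $g\in L^{q}$, take polynomials $f_m\to f$, $g_m\to g$ in the respective norms, telescope $\frac1n\bigl|\mathrm{tr}[T_n(f)T_n(g)]^{2}-\mathrm{tr}[T_n(f_m)T_n(g_m)]^{2}\bigr|$ into four traces of products of four Toeplitz matrices, each carrying one factor $T_n(f-f_m)$ or $T_n(g-g_m)$, and apply the Schatten--H\"older inequality $\frac1n\bigl|\mathrm{tr}(T_n(h_1)T_n(h_2)T_n(h_3)T_n(h_4))\bigr|\le c\prod_i\|h_i\|_{L^{r_i}}$ valid whenever $\sum_i1/r_i=1$ and each $r_i\ge2$; this in turn rests on the Toeplitz Schatten-norm bound $\frac1n\|T_n(h)\|_{S_r}^{r}\le c\,\|h\|_{L^{r}}^{r}$ for $2\le r<\infty$ (the Hilbert--Schmidt identity at $r=2$, interpolated with the operator bound $\|T_n(h)\|_{S_\infty}\le c\|h\|_\infty$). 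The hypothesis $1/p+1/q\le1/2$ is exactly what lets the four exponents attached to $(f,g,f,g)$ be chosen $\ge2$ with reciprocal sum $1$; the four telescoping errors then tend to $0$ uniformly in $n$, $M_{2}$ is jointly $L^{p}\times L^{q}$-continuous by H\"older, and an $\varepsilon/3$ argument closes the case.

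Condition \textbf{(A4)} reduces to \textbf{(A3)}: the envelopes $f(\lambda)\le|\lambda|^{-\alpha}L_1(\lambda)$, $|g(\lambda)|\le|\lambda|^{-\beta}L_2(\lambda)$ majorize $|\varphi(u)|$ by an explicit integral with power singularities at $0,u_1,u_2,u_3$, whose $L^{2}(\mathbb T^{3})$-norm is finite exactly because $\alpha+\beta\le1/2$ and $\lambda^{-(\alpha+\beta)}L_i\in L^{2}$ --- the slowly varying factors absorbing the boundary case $\alpha+\beta=1/2$ and the bounds $\alpha,\beta<1$ keeping the individual singularities locally integrable --- while continuity of $\varphi$ at $\mathbf 0$ follows from dominated convergence against that majorant and continuity of translation. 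Condition \textbf{(A2)} is the $L^{2}$-variant: square-integrability of $f,g,fg$ alone does not yield continuity at $\mathbf 0$ of the integrand of $\varphi$, and the displayed limit $\int f^{2}(\lambda)g^{2}(\lambda-\mu)\,d\lambda\to\int f^{2}g^{2}$ as $\mu\to0$ is precisely the substitute for it that makes the argument of the Localization Lemma go through. In summary, the two substantive obstacles are the Localization Lemma itself and, for \textbf{(A1)}, the Toeplitz Schatten-norm inequalities --- equivalently, the control of the Hankel-type ``edge'' corrections in $T_n(f)T_n(g)-T_n(2\pi fg)$.
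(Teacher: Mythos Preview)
The paper does not prove Theorem~A at all: it is stated as a summary of known results, and Remark~1 immediately afterwards attributes (A1) to Avram, (A2) to Giraitis--Surgailis, and (A3)--(A4) to Ginovyan--Sahakyan, with no argument reproduced. So there is no ``paper's own proof'' to compare your attempt against; what you have written is an independent reconstruction of results drawn from the cited literature. Your integral representation is correct and coincides (after the linear change of variables $u_i\mapsto u_i-u_{i-1}$) with formula~(\ref{a-3}), which the paper does use in proving its new Theorem~\ref{th01}; and your route for (A1) via Schatten--H\"older inequalities and polynomial approximation is essentially Avram's original argument and is sound.

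Where your sketch has a genuine gap is the Localization Lemma, i.e.\ case (A3). The high-frequency bound you propose applies Cauchy--Schwarz against the $\ell^{2}$-tail of $\widehat\psi$, which would require $\sum_{m}\bigl(N_n(m)/n\bigr)^{2}$ to be bounded uniformly in $n$. But by Parseval this sum equals $(2\pi)^{-3}\|K_n/n\|_{L^{2}}^{2}$, and writing $|K_n|^{2}$ as a product of four Fej\'er kernels $F_n=|D_n|^{2}$ one gets, after the obvious change of variables,
\[
\|K_n\|_{L^{2}}^{2}=\int_{\mathbb T^{3}}F_n(v_1)F_n(v_2)F_n(v_3)F_n(v_1{+}v_2{+}v_3)\,dv
=(2\pi)^{3}\sum_{|k|<n}(n-|k|)^{4}\ \sim\ c\,n^{5},
\]
so $\sum_{m}(N_n(m)/n)^{2}\sim c\,n^{3}$, not $O(1)$. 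The low-frequency half is also not as you describe: letting $n\to\infty$ turns it into the rectangular partial Fourier sum $\sum_{\|m\|\le N}\widehat\psi(m)$, and mere continuity of $\psi$ at $\mathbf 0$ does \emph{not} force this to converge to $\psi(\mathbf 0)=0$ (the du~Bois-Reymond phenomenon already in one variable). You correctly diagnose that $\tfrac1nK_n$ is not an $L^{1}$-bounded approximate identity, but the substitute mechanism you offer does not close the gap; the proof in \cite{GS1} proceeds by a different device. Since you route (A2) and (A4) through (A3), those cases inherit the same lacuna.
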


\begin{rem}
{\rm
Assertion (A1) was proved by \cite{A}.
For special case $p=q=\infty$,
it was first established by \cite{GS},
while the case $p=2$, $q=\infty$ was proved by
\cite{R2} and \cite{I}.
Assertion (A2) was proved in \cite{GSu}
(see, also, \cite{GKSu}).
Assertions (A3) and (A4) were established in \cite{GS1}.
A special case of (A4), when
$\alpha+\beta<1/2$, was considered by \cite{FT1}.}
\end{rem}

\begin{TB}
\label{TB}
The following assertions hold:
\begin{itemize}
\item[{\bf (B1)}] 
If the Fourier coefficients  $\widehat f(k)$ and $\widehat g(k)$
of functions $f(\lambda)$ and  $g(\lambda)$ satisfy the conditions
\begin{equation}
\label{im-8}
\sum_{k=-\infty}^\infty|k||\widehat f(k)|<\infty \quad {\rm and} \quad
\sum_{k=-\infty}^\infty|k||\widehat g(k)|<\infty,
\end{equation}
then for $\nu=1,2,\ldots$
\begin{equation}
\label{im-9}
\Delta_{n,\nu}(f,g)=O(n^{-1}) \quad  {\rm as }\ n\to\infty.
\end{equation}

\item[{\bf (B2)}]
If there exist constants $C_i$ with $0<C_i<\infty,$ $i=1,2,3,4,$ such that
\begin{eqnarray}
\label{LP-0}
&&
\sup_{\lambda\in [-\pi,\pi]}|f(\lambda)|\le C_1, \quad
\sup_{\lambda\in [-\pi,\pi]}|g(\lambda)|\le C_2,\\
&&
\label{LP-1}
\sup_{\lambda\in [-\pi,\pi]}|f'(\lambda)|\le C_3, \quad
\sup_{\lambda\in [-\pi,\pi]}|g'(\lambda)|\le C_4,
\end{eqnarray}
then for any \ $\epsilon>0$ and $\nu=1,2,\ldots$
\begin{equation}
\label{in-101}
\Delta_{n,\nu}(f,g)=O(n^{-1+\epsilon}) \quad  as \,\, n\to\infty.
\end{equation}
\end{itemize}
\end{TB}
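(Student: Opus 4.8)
\medskip
\noindent\emph{Proof plan.}
The plan is to deduce both assertions from a single combinatorial estimate. First I would expand the trace entrywise: with the convention $i_{\nu+1}=i_1$,
\[
\mathrm{tr}[T_n(f)T_n(g)]^\nu=\sum_{i_1,\dots,i_\nu,\,j_1,\dots,j_\nu=1}^{n}\ \prod_{l=1}^{\nu}\widehat f(i_l-j_l)\,\widehat g(j_l-i_{l+1}),
\]
and pass to the ``jump'' variables $m_l=i_l-j_l$, $p_l=j_l-i_{l+1}$, which satisfy $\sum_{l=1}^{\nu}(m_l+p_l)=0$. Conversely, a zero-sum tuple $(m_1,p_1,\dots,m_\nu,p_\nu)$ together with a value of $i_1$ determines all remaining indices, and the number $N_n(m,p)$ of admissible $i_1\in\{1,\dots,n\}$ equals $\max(0,\,n-R)$, where $R=R(m,p)$ is the range of the partial sums of the jump sequence; in particular $R\le S:=\sum_{l=1}^{\nu}(|m_l|+|p_l|)$. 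On the other side, expanding $f$ and $g$ in Fourier series and integrating shows that $M_\nu=\sum_{(m,p)}\prod_{l=1}^{\nu}\widehat f(m_l)\widehat g(p_l)$, the sum being over all zero-sum tuples --- the very series arising for $S_{n,\nu}$ but with the weight $N_n/n$ replaced by $1$. Writing $\mathcal Z$ for the set of zero-sum tuples, this gives the key bound
\begin{equation}\label{eq:keyplan}
\Delta_{n,\nu}(f,g)\ \le\ \sum_{(m,p)\in\mathcal Z}\Big(\prod_{l=1}^{\nu}|\widehat f(m_l)|\,|\widehat g(p_l)|\Big)\min\!\Big(1,\tfrac{S}{n}\Big),
\end{equation}
all rearrangements being legitimate since the series converges absolutely under either hypothesis.

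For (B1) the rest is short. I would bound $\min(1,S/n)\le S/n$ in \eqref{eq:keyplan}, drop the constraint $(m,p)\in\mathcal Z$, and expand $S=\sum_l(|m_l|+|p_l|)$; the resulting sum splits into $2\nu$ products, each containing one weighted factor $\sum_k|k||\widehat f(k)|$ or $\sum_k|k||\widehat g(k)|$ and $2\nu-1$ plain factors $\sum_k|\widehat f(k)|$ or $\sum_k|\widehat g(k)|$. Hypothesis \eqref{im-8} makes the weighted factors finite and a fortiori $\sum_k|\widehat f(k)|,\ \sum_k|\widehat g(k)|<\infty$, so the right-hand side of \eqref{eq:keyplan} is $O(1/n)$, which is \eqref{im-9}.

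For (B2) the hypotheses \eqref{LP-0}--\eqref{LP-1} only give $|\widehat f(k)|,\ |\widehat g(k)|\le C(1+|k|)^{-1}$ (one integration by parts), which is not enough to discard the constraint; now the zero-sum condition has to be exploited. I would first invoke subadditivity of $t\mapsto\min(1,t)$ to write $\min(1,S/n)\le\sum_l[\min(1,|m_l|/n)+\min(1,|p_l|/n)]$, so that by symmetry it suffices to estimate $\Sigma:=\sum_{(m,p)\in\mathcal Z}\big(\prod_l|\widehat f(m_l)\widehat g(p_l)|\big)\min(1,|m_1|/n)$. Then I would solve the constraint for $p_\nu$, bound $|\widehat g(p_\nu)|\le C(1+|s|)^{-1}$ with $s=m_1+\dots+m_\nu+p_1+\dots+p_{\nu-1}$, and sum out the $2\nu-2$ free variables other than $m_1$ using the elementary estimate (proved by induction on $j$)
\[
\underbrace{\tfrac1{1+|\cdot|}*\dots*\tfrac1{1+|\cdot|}}_{j}\,(t)\ \le\ C_j\,\frac{\big(\log(2+|t|)\big)^{j-1}}{1+|t|},\qquad t\in\mathbb Z .
\]
This collapses $\Sigma$ to $C\sum_{m_1}(1+|m_1|)^{-2}\big(\log(2+|m_1|)\big)^{2\nu-2}\min(1,|m_1|/n)$; splitting that sum at $|m_1|=n$ gives $\Sigma=O\big(n^{-1}(\log n)^{2\nu-1}\big)$, whence $\Delta_{n,\nu}(f,g)=O\big(n^{-1}(\log n)^{2\nu-1}\big)=O(n^{-1+\epsilon})$, which is \eqref{in-101}.

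The main obstacle is (B2): the reduction to \eqref{eq:keyplan} is routine and (B1) is immediate, but under \eqref{LP-0}--\eqref{LP-1} the loss of $\ell^1$-summability of the Fourier coefficients is genuine, so the diagonal constraint cannot be removed and one is driven into the book-keeping of a $(2\nu-1)$-fold constrained sum via iterated convolutions of $1/(1+|\cdot|)$. That book-keeping is precisely what produces the extraneous $(\log n)^{2\nu-1}$, i.e.\ the arbitrarily small $\epsilon$ in \eqref{in-101}.
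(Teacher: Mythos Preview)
The paper does not actually prove Theorem~B: it is stated there as a summary of known results, with (B1) attributed to Taniguchi (1983) and (B2) to Lieberman--Phillips (2004) in the remark immediately following the statement. The only contact the paper itself makes with a proof is the later observation that bounded $f'$, $g'$ force $f,g\in{\rm Lip}(p,1)$ for every $p$, so that the paper's Theorem~2 recovers (B2) in the special case $\nu=2$.

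Your argument is therefore an independent, self-contained proof, and it is correct. The Fourier-side expansion, the change to jump variables, the identity $N_n(m,p)=\max(0,n-R)$ with $R\le S$, and the identification of $M_\nu$ with the zero-sum Fourier series are all standard and sound; your key inequality~\eqref{eq:keyplan} follows. Part (B1) is then immediate and is essentially Taniguchi's original argument. For (B2) your route differs from both Lieberman--Phillips and from the paper's own spectral/Dirichlet-kernel machinery: you stay entirely on the coefficient side, use only the crude decay $|\widehat f(k)|,|\widehat g(k)|\le C(1+|k|)^{-1}$ coming from one integration by parts, exploit the zero-sum constraint via the iterated convolution bound
\[
\underbrace{\tfrac1{1+|\cdot|}*\cdots*\tfrac1{1+|\cdot|}}_{j}(t)\ \le\ C_j\,\frac{(\log(2+|t|))^{j-1}}{1+|t|},
\]
and obtain the quantitatively sharper conclusion $\Delta_{n,\nu}(f,g)=O\big(n^{-1}(\log n)^{2\nu-1}\big)$, of which \eqref{in-101} is an immediate consequence. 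The induction for the convolution bound and the final splitting at $|m_1|=n$ both check out (for $\nu=1$ the ``$2\nu-2$ free variables'' degenerate to none, and your formula still gives the correct $O(n^{-1}\log n)$). What you gain over the paper's approach is generality in $\nu$ with a completely elementary argument and an explicit logarithmic rate; what the paper's Lipschitz/Dirichlet-kernel framework buys instead is a single theorem (its Theorem~1) that simultaneously covers (B2) for $\nu=2$ and the singular cases treated in its Theorems~2 and~3.
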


\begin{rem}
{\rm
Assertion (B1) was established in \cite{Ta} (see, also, \cite{TK}).
Assertions (B2) was proved in \cite{LP}.
Note that in (B2) the asymptotic relation (\ref{in-101})
is valid under the single condition (\ref{LP-1})
because (\ref{LP-1}) obviously implies (\ref{LP-0}).}
\end{rem}

\begin{rem}
{\rm
In  \cite {LP} was also stated the following result (see \cite {LP}, Theorem 2)}.

\noindent
{\bf (B3)}
Assume that the functions $f(\lambda)$ and $g(\lambda)$ satisfy the conditions:
\begin{itemize}
\item[(a)]
$f(\lambda)$ and $g(\lambda)$ are symmetric, real valued, continuously differentiable
at all $\lambda\ne 0$ and there exist $0<C_{i}<\infty$, $i=1,2,$ such that for any
$\lambda\in [-\pi,\pi]$
$$|f(\lambda)|\le C_1|\lambda|^{-\alpha}, \quad |g(\lambda)|\le C_2|\lambda|^{-\beta},
\quad \alpha<1, \, \beta<1.$$
\item[(b)]
For all $t>0$ there exist $M_{t1}$ and $M_{t2}$ such that
$$\sup_{|\lambda|>t}|f'(\lambda)|\le M_{t1} \quad {\rm and} \quad \sup_{|\lambda|>t}|g'(\lambda)|\le M_{t2}.$$
\item[(c)]
$\nu(\alpha+\beta)<1$, \ $\nu\in\mathbb{N}$.
\end{itemize}
Then for any \ $\epsilon>0$
\begin{equation}
\label{im-10}
\Delta_{n,\nu}(f,g)= \left \{
           \begin{array}{ll}
           O(n^{-1+\nu(\alpha+\beta)+\epsilon}), & \mbox {if \, $\alpha+\beta>0$}\\
           O(n^{-1+\epsilon}), & \mbox {if \, $\alpha+\beta\le0$}.
           \end{array}
           \right.
\end{equation}

{\rm First observe that condition (a) in (B3) implies condition (b).

Unfortunately, the proof of (B3) given in \cite {LP} contains an inaccuracy.
The issue is that the authors assertion that
{\em "the last integral in formula (26) is finite under the conditions (27)"}
(\cite{LP}, p. 743), is not correct.

More precisely, they state that for some $t\in (0,\pi)$  the integral
\begin{eqnarray}
\label{LP-3}
\nonumber
&&I:\,=\int_{A_t}
|z_1|^{2\nu \eta-\nu(\alpha+\beta)-1}
|z_2\cdots z_{2\nu}-1|^{\eta-1}\\
&&\hskip25mm
\times |z_2-1|^{\eta-1}\cdots|z_{2\nu}-1|^{\eta-1}
dz_1\cdots dz_{2\nu},
\end{eqnarray}
where
\begin{eqnarray}
\label{LP-33}
\nonumber
A_t:=\big\{(z_1,\ldots, z_{2\nu})\in \mathbb R^{2\nu}:\ |z_1|\leq t,\  |z_1z_2|\leq t,
\ldots, \ |z_1\cdots z_{2\nu}|\leq t, &&\\
|z_1|>\frac12|z_1z_2|>\cdots>\frac1{2^{2\nu-1}}|z_1\cdots z_{2\nu}|>\frac1{2^{2\nu}}|z_1|\big\}&&
\end{eqnarray}
converges (is finite) in the parameter set (see (c) and  \cite {LP},  formulas (26) and (27)):
\begin{equation}
\label{LP-4}
\frac12( \alpha+\beta)
<\eta<1, \qquad 0< \alpha+\beta<\frac1\nu,
\end{equation}
and then conclude that the quantity
\begin{equation}
\label{LP-5}
J_n:=\frac C{n^{1-2\nu\eta}}\times I,
\end{equation}
where $C$ is a constant, goes to zero as $n\to\infty$ with the specified rate.

First observe that to have $J_n\to 0$ as $n\to\infty$, the condition
$1-2\nu\eta > 0$ should be imposed, that is, along with (\ref{LP-4}),
the parameter $\eta$ should also satisfy
\begin{equation}
\label{LP-6}
\eta<\frac 1 {2\nu}.
\end{equation}

The arguments that follow, show that the integral in (\ref{LP-3})
diverges in the parameter set (\ref{LP-4}), (\ref{LP-6}).

We first prove the following inequality: for $ 0<\gamma<1,\  \ 0<\theta<1$ and $y_0>2$
\begin{equation}
\label{LP-61}
\int_1^2  { (xy-1)^{-\theta} (x-1)^{-\gamma}} dx\ge  c\,
{(y-1)^{1-\gamma-\theta}},\qquad 1<y<y_0,
\end{equation}
where the constant  $c$ depends only on $\gamma$,  $\theta$ and $y_0$.
To prove (\ref{LP-61}), observe first that for $1\le x\le y\le 2$,
$$
xy-1=(x-1)(y-1)+(x-1)+(y-1)\le 3(y-1).
$$
Consequently,
\begin{eqnarray*}
&&\int_1^2   (xy-1)^{-\theta}} {(x-1)^{-\gamma}dx\ge
\int_1^y   (xy-1)^{-\theta}} {(x-1)^{-\gamma}dx\ge\\
&\ge&
{3^{-\theta}}{(y-1)^{-\theta}}\int_1^y  {(x-1)^{-\gamma}} dx
= \frac{3^{-\theta}} {(1-\gamma)}\cdot
{(y-1)^{1-\gamma-\theta}},
\end{eqnarray*}
yielding  (\ref{LP-61}) for $1< y\le2$.

For $2<y<y_0$ we have ${(y-1)^{1-\gamma-\theta}}<y_0$  and
\begin{eqnarray*}
\int_1^2  (xy-1)^{-\theta}}  {(x-1)^{-\gamma}dx&\ge&
\int_1^2  (xy_0-1)^{-\theta}}  {(x-1)^{-\gamma}dx\\
&=:&J\ge \frac {J}{y_0}\cdot
{(y-1)^{1-\gamma-\theta}},
\end {eqnarray*}
where $J$ depends only on $\gamma$,  $\theta$ and $y_0$. Inequality (\ref{LP-61})  is proved.

Now, setting $\varepsilon:=2\nu\eta-\nu(\alpha+\beta)>0$,
taking into account that  (see (\ref{LP-33}))
$$
A_t\supset\left\{(z_1,\ldots z_{2\nu})\in \mathbb R^{2\nu}:0<z_1< \frac t{2^\nu}, \ 1<z_i<2, \ i=2,3,\ldots,2\nu\right\},
$$
and applying (\ref {LP-61}) with $y_0=2^{2\nu}$ successively $(2\nu-2)$ times,
we get
\begin{eqnarray*}
I&=&
\int_{A_t}|z_1|^{\varepsilon-1}
|z_2\cdots z_{2\nu}-1|^{\eta-1}
|z_2-1|^{\eta-1}\cdots |z_{2\nu}-1|^{\eta-1}
dz_1\cdots dz_{2\nu}\\
&\ge &
\int_{0}^{t/2^\nu}z_1^{\varepsilon-1}dz_1
\int_1^2\cdots \int_1^2
(z_2z_3\cdots z_{2\nu}-1)^{\eta-1}(z_2-1)^{\eta-1}dz_2\\
&&\hskip4cm\times
(z_3-1)^{\eta-1} \cdots (z_{2\nu}-1)^{\eta-1}dz_3\cdots dz_{2\nu}\\
&\ge & c
\int_{0}^{t/2^\nu}z_1^{\varepsilon-1}dz_1
\int_1^2\cdots \int_1^2
(z_3\cdots z_{2\nu}-1)^{2\eta-1}(z_3-1)^{\eta-1}dz_3\\
&&\hskip4cm\times
(z_4-1)^{\eta-1} \cdots (z_{2\nu}-1)^{\eta-1}dz_4\cdots dz_{2\nu}\\
&\ge& \cdots \cdots\cdots \cdots \cdots\cdots \cdots \cdots\cdots \cdots \cdots\cdots\\
&\ge& c_1
\int_{0}^{t/2^\nu}z_1^{\varepsilon-1}dz_1
\int_1^2
(z_{2\nu}-1)^{2\nu\eta-2}dz_{2\nu}.
\end{eqnarray*}
The last integral diverges, since by (\ref {LP-6})  $2\nu\eta-2<-1$ .
}
\end{rem}

In this paper we prove the asymptotic relation
\begin{equation}
\label{in-10} \Delta_{n,\nu}(f,g) =O(n^{-\gamma}), \quad \gamma>0,
\quad {\rm as} \quad n\to\infty
\end{equation}
for some classes of generating functions $f(\lambda)$ and $g(\lambda)$.
The results improve some of the $o(1)$ rates stated in Theorem A.
For simplicity we state and prove the results in the typical special
case where $\nu=2.$

\section{Error bounds for $\Delta_{n,2}$}

For $\psi\in L^p(\mathbb T)$, $1\le p\le\infty$ we denote by
$\omega_p(\psi,\delta)$ the $L^p$--modulus of continuity of $\psi$:
$$
\omega_p(\psi,\delta) :=\sup_{0<h\le \delta}\|\psi(\cdot+h)-\psi(\cdot)\|_p,
\quad \delta>0.
$$
Given numbers $0 < \gamma \le 1$ and $1\le p\le\infty$, we denote by
${\rm Lip} (p, \gamma)={\rm Lip} (\mathbb T; p, \gamma)$ the $L^p$-Lipschitz
class of functions defined on $\mathbb T$ (see, e.g., \cite{BN}):
$${\rm Lip} (p, \gamma) = \{\psi(\lambda)\in L^p(\mathbb T);
\quad \omega_p(\psi;\delta) = O(\delta ^\gamma),
\quad \delta \to 0\}.$$
Observe that if $\psi\in{\rm Lip} (p, \gamma)$, then there exists a
constant $C$ such that $\omega_p(\psi;\delta)\le C\,\delta ^\gamma$
for all $\delta>0$.

The main results of the paper are the following theorems.

\begin{thm}\label{th01}
Let the function $\varphi ({\bf u})=\varphi(u_1,u_2,u_3)$ be as in
(\ref{in-6}). Assume that with some constants $C>0$ and
 $\gamma\in(0,1]$
\begin{equation} \label{t1}
|\varphi ({\bf
u})-\varphi ({\bf 0})|\leq C|{\bf u}|^\gamma, \quad {\bf u}=(u_1,u_2,u_3)\in
\mathbb R^3, \end{equation} where ${\bf 0}=(0,0,0)$ and
$|{\bf u}|=|u_1|+ |u_2| + |u_3|$. Then for any $\varepsilon >0$
\begin{equation} \label{t2}
\Delta_{n,2}(f,g)=O\left(n^{-\gamma+\varepsilon}\right)
\quad\text{as}\quad n\to\infty. \end{equation}
\end{thm}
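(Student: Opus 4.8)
The plan is to convert the trace into an integral against a product of the kernels $D_n(\theta):=\sum_{j=1}^{n}e^{ij\theta}$ and then to feed the hypothesis (\ref{t1}) directly into that integral. Writing out the fourfold matrix product, using $[T_n(u)]_{kj}=\widehat u(k-j)$ together with (\ref{in-02}), and carrying out the exponential sums over the four indices, one gets
\[
\mathrm{tr}[T_n(f)T_n(g)]^2=\int_{\mathbb T^4} f(\lambda_1)g(\lambda_2)f(\lambda_3)g(\lambda_4)\,D_n(\lambda_1-\lambda_4)D_n(\lambda_2-\lambda_1)D_n(\lambda_3-\lambda_2)D_n(\lambda_4-\lambda_3)\,d\lambda_1\cdots d\lambda_4 .
\]
The substitution $u_j=\lambda_1-\lambda_{j+1}$ ($j=1,2,3$), followed by integration in $\lambda_1$, turns this, via the definition (\ref{in-6}) of $\varphi$ and $2\pi$-periodicity, into
\[
S_{n,2}=\frac1n\int_{\mathbb T^3}\varphi({\bf u})\,D_n(-u_1)D_n(u_1-u_2)D_n(u_2-u_3)D_n(u_3)\,d{\bf u}.
\]
Expanding the product of the four kernels and integrating term by term, its integral over $\mathbb T^3$ equals $(2\pi)^3$ times the number of index quadruples with all four indices equal, i.e. $(2\pi)^3 n$; since also $M_2=(2\pi)^3\varphi({\bf 0})$ by (\ref{in-3}), subtraction gives
\[
S_{n,2}-M_2=\frac1n\int_{\mathbb T^3}\bigl[\varphi({\bf u})-\varphi({\bf 0})\bigr]D_n(-u_1)D_n(u_1-u_2)D_n(u_2-u_3)D_n(u_3)\,d{\bf u}.
\]
All interchanges are legitimate because the integrand in the first display lies in $L^1(\mathbb T^4)$ ($D_n$ is bounded, $f,g\in L^1$), and (\ref{t1}) forces $\varphi\in L^\infty$, so $M_2$ is finite.

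Now, using $|D_n(-\theta)|=|D_n(\theta)|$, (\ref{t1}), and the subadditivity $(a+b+c)^\gamma\le a^\gamma+b^\gamma+c^\gamma$ valid for $0<\gamma\le1$, we have $|\varphi({\bf u})-\varphi({\bf 0})|\le C(|u_1|^\gamma+|u_2|^\gamma+|u_3|^\gamma)$, so $\Delta_{n,2}\le C(J_{n,1}+J_{n,2}+J_{n,3})$ where
\[
J_{n,i}:=\frac1n\int_{\mathbb T^3}|u_i|^\gamma\,|D_n(u_1)|\,|D_n(u_1-u_2)|\,|D_n(u_2-u_3)|\,|D_n(u_3)|\,d{\bf u},
\]
and it suffices to prove $J_{n,i}=O(n^{-\gamma}\log^3 n)$, since $\log^3 n=O(n^{\varepsilon})$ then yields (\ref{t2}). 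Put $\delta_n(\theta):=\min(n,\pi/|\theta|)$, so $|D_n(\theta)|\le\delta_n(\theta)$ on $\mathbb T$. The pivotal estimate is the self-improving convolution bound $(\delta_n*\delta_n)(\theta)\le C\log n\cdot\delta_n(\theta)$ on $\mathbb T$: for $|\theta|<\pi/n$ it follows from $(\delta_n*\delta_n)(\theta)\le\|\delta_n\|_{L^2}^2\le Cn=C\delta_n(\theta)$; for $|\theta|\ge\pi/n$ one splits the convolution integral over $\{|w|\le|\theta|/2\}$ and over $\{|\theta-w|\le|\theta|/2\}$, on each of which one factor is $\le 2\pi/|\theta|$ and the other integrates to $\|\delta_n\|_{L^1}\le C\log n$, and over the complement, where $\delta_n(w)\delta_n(\theta-w)\le\pi^2|w|^{-1}|\theta-w|^{-1}$ and the integral is $\le C/|\theta|$.

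With this in hand, fix $i$ and integrate $J_{n,i}$ one variable at a time, eliminating the two variables that do not carry the weight $|u_i|^\gamma$; each step has the form $\int_{\mathbb T}\delta_n(a-t)\delta_n(t-b)\,dt=(\delta_n*\delta_n)(a-b)\le C\log n\cdot\delta_n(a-b)$, where one uses that $\delta_n$ is even and the cyclic structure of the four kernels, which makes the cases $i=1,2,3$ identical after relabelling. This leaves
\[
J_{n,i}\le\frac{(C\log n)^2}{n}\int_{\mathbb T}|u_i|^\gamma\,\delta_n(u_i)^2\,du_i ,
\]
and splitting the last integral at $|u_i|=\pi/n$ gives $\int_{\mathbb T}|u_i|^\gamma\delta_n(u_i)^2\,du_i\le C\,n^{1-\gamma}\log n$ for all $\gamma\in(0,1]$ (the logarithm being needed only at $\gamma=1$). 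Hence $J_{n,i}=O(n^{-\gamma}\log^3 n)$ and $\Delta_{n,2}(f,g)=O(n^{-\gamma}\log^3 n)=O(n^{-\gamma+\varepsilon})$.

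The main obstacle is the convolution estimate $(\delta_n*\delta_n)(\theta)\le C\log n\cdot\delta_n(\theta)$: incurring only a logarithmic (rather than polynomial) loss at each of the successive integrations is exactly what produces the rate $n^{-\gamma+\varepsilon}$, and it is the only point requiring a genuine — though elementary — argument; the trace identity, the reductions around it, and the final one-dimensional integrals are routine.
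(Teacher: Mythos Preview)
Your proof is correct, but the route differs from the paper's. Both arguments start from the same integral representation of $\Delta_{n,2}$ against a product of four Dirichlet kernels (your change of variables gives the ``chain'' form $D_n(-u_1)D_n(u_1-u_2)D_n(u_2-u_3)D_n(u_3)$, the paper's gives $D_n(u_1)D_n(u_2)D_n(u_3)D_n(u_1+u_2+u_3)$; these are equivalent after a further linear substitution). From there the methods diverge. The paper bounds each kernel by the interpolated estimate $|D_n(u)|\le \pi n^{\delta}|u|^{\delta-1}$ with $\delta=(1+\varepsilon-\gamma)/4$, reducing the problem to the finiteness of the Riesz-type integral $\int_{\mathbb T^3}|u_i|^{\gamma}|u_1u_2u_3(u_1+u_2+u_3)|^{\delta-1}\,d{\bf u}$, which is handled by repeated use of the one-dimensional beta integral (Lemmas~\ref{lem1} and~\ref{lem2}). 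You instead use the sharper pointwise bound $|D_n|\le \delta_n=\min(n,\pi/|\cdot|)$ and the convolution inequality $\delta_n*\delta_n\le C\log n\cdot\delta_n$, which lets you integrate out the two variables not carrying the weight $|u_i|^{\gamma}$ one at a time, collapsing everything to a single one-dimensional integral. Your approach yields the explicit rate $O(n^{-\gamma}\log^3 n)$, which is marginally sharper than the paper's $O(n^{-\gamma+\varepsilon})$, and it bypasses the beta-integral computation; the paper's approach, on the other hand, needs no iteration and reduces immediately to a clean static integral whose convergence is transparent. Both are elementary and of comparable length.
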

The next two theorems we will deduce from Theorem \ref{th01},
and hence can be considered as corollaries of Theorem \ref{th01}.

\begin{thm}\label{th02}
Let $f\in{\rm Lip}(p, \gamma)$  and
$g\in {\rm Lip}(q, \gamma)$ with $\gamma \in (0,1]$ and
$p,q\ge1$ such that $1/p+1/q\le1/2$.
Then (\ref{t2}) holds for any $\varepsilon >0$.
\end{thm}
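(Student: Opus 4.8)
The plan is to reduce Theorem~\ref{th02} to Theorem~\ref{th01} by showing that the Lipschitz hypotheses on $f$ and $g$ force the trilinear integral $\varphi(\mathbf u)$ defined in (\ref{in-6}) to satisfy the Hölder-type bound (\ref{t1}) with the same exponent $\gamma$. Once (\ref{t1}) is verified, the conclusion (\ref{t2}) is immediate from Theorem~\ref{th01}, so the whole content is the estimate on the modulus of continuity of $\varphi$.

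First I would write the increment $\varphi(\mathbf u)-\varphi(\mathbf 0)$ as a sum of four terms by a telescoping (standard ``hybrid'') argument, replacing one shifted factor at a time inside
$$
\int_{\mathbb T} f(\lambda)g(\lambda-u_1)f(\lambda-u_2)g(\lambda-u_3)\,d\lambda
- \int_{\mathbb T} f(\lambda)g(\lambda)f(\lambda)g(\lambda)\,d\lambda .
$$
A typical term has the form $\int_{\mathbb T} [g(\lambda-u_1)-g(\lambda)]\,h(\lambda)\,d\lambda$ where $h$ is a product of three of the remaining (shifted or unshifted) copies of $f$ and $g$. To bound such a term I would apply Hölder's inequality with the exponent split dictated by the hypothesis $1/p+1/q\le 1/2$: the difference $g(\cdot-u_1)-g(\cdot)$ is measured in $L^q$, giving $\omega_q(g,|u_1|)\le C|u_1|^\gamma$, and the factor $h$, being a product of one $f$-type and two $g$-type functions (or two $f$-type and one $g$-type, depending on the term), is measured in the conjugate exponent. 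Here I would use that $f\in L^p$, $g\in L^q$, and that a product of powers of such functions lies in the appropriate $L^r$ by the generalized Hölder inequality; the arithmetic of the exponents works out precisely because $1/p+1/q\le 1/2$ leaves enough room to accommodate two factors of one type and one of the other while keeping the total reciprocal exponent at most $1$. Summing the four telescoped terms and using $|u_i|\le|\mathbf u|$ yields $|\varphi(\mathbf u)-\varphi(\mathbf 0)|\le C|\mathbf u|^\gamma$, which is exactly (\ref{t1}).

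The step I expect to require the most care is the exponent bookkeeping in the generalized Hölder estimate for the mixed-product factor $h$: one must check in each of the four hybrid terms that the three functions appearing (with their specific multiplicities of $f$ and $g$) can be assigned Lebesgue exponents summing in reciprocal to at most $1-1/q$ (resp.\ $1-1/p$), using only $f\in L^p$, $g\in L^q$ and $1/p+1/q\le 1/2$. A clean way to organize this is to note that since $L^p(\mathbb T)\subset L^{p'}(\mathbb T)$ for $p'\le p$ on the finite measure space $\mathbb T$, one may freely lower exponents, so it suffices to handle the extremal case $1/p+1/q=1/2$ and then invoke this monotonicity; with $1/p+1/q=1/2$ the three-fold product of one $f$ and two $g$'s lies in $L^{r}$ with $1/r=1/p+2/q=1/2+1/q$, whose conjugate is $1/(1-1/r)$, and one checks $1/r+1/q = 1/2+2/q \le 1$ fails in general — so in fact one should instead measure the difference in $L^p$ or $L^q$ chosen adaptively per term, or apply Hölder with three rather than two factors. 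I would set up the dyadic/telescoping decomposition carefully enough that each increment factor carries exactly one ``difference,'' measured in whichever of $L^p$ or $L^q$ matches the function being shifted, and distribute the remaining three factors across the complementary exponent, verifying the single inequality $1/p+1/q\le1/2$ suffices; this is the routine but delicate part of the argument, and the rest follows formally from Theorem~\ref{th01}.
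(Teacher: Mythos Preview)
Your plan is correct and is exactly the paper's approach: reduce to Theorem~\ref{th01} by verifying (\ref{t1}) via a telescoping/product expansion and the generalized H\"older inequality. Your exponent worry is a miscount: when the differenced factor is a $g$, the three remaining factors are \emph{two} $f$'s and one $g$ (not one $f$ and two $g$'s), so the relevant check is $1/q + (2/p+1/q)\le 1$, i.e.\ $2/p+2/q\le 1$, which is precisely the hypothesis; the paper packages this cleanly by writing the integrand as $h_0h_1h_2h_3$ with $h_0=h_2=f\in L^p$, $h_1=h_3=g\in L^q$ and applying four-fold H\"older with $\sum_{i=0}^3 1/p_i = 2/p+2/q\le 1$.
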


\begin{thm}\label{NT}
Let $f_i(\lambda)$, $i=1,2$, be two differentiable functions
on $[-\pi,\pi]\setminus\{0\}$, such that for some constants
$\alpha_i>0$, $i=1,2$, satisfying $\alpha_1+\alpha_2<1/2$
 and $M_{1i}, M_{2i}>0$,  $i=1,2$
\begin{equation}
\label{d1}
|f_i(\lambda)|\le M_{1i}|\lambda|^{-\alpha_i},\quad
|f_i^\prime(\lambda)|\le M_{2i}|\lambda|^{-(\alpha_i+1)},\quad
\lambda\in [-\pi,\pi]\setminus\{0\}.
\end{equation}
Then for any $\varepsilon >0$
\begin{equation} \label{t20}
\Delta_{n,2}(f_1,f_2)=O\left(n^{-\gamma+\varepsilon}\right)
\quad\text{as}\quad n\to\infty
\end{equation}
with
\begin{equation}
\label{d2}
\gamma =\frac 14-\frac{\alpha_1+\alpha_2}2.
\end{equation}
\end{thm}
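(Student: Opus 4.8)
The plan is to derive Theorem~\ref{NT} from Theorem~\ref{th02}: I will show that the conditions (\ref{d1}) force each $f_i$ into an $L^{p_i}$-Lipschitz class, with the exponents $p_1,p_2$ chosen so that $1/p_1+1/p_2\le 1/2$ and the common Lipschitz order equals $\gamma=\frac14-\frac{\alpha_1+\alpha_2}{2}$. Note that the point of the derivative bound in (\ref{d1}) is that it is the ``matched'' one, $|f_i'(\lambda)|\le M_{2i}|\lambda|^{-\alpha_i-1}$, rather than merely boundedness away from $0$ as in (B3)(b); this is exactly what is needed to control the $L^{p_i}$-modulus of continuity of $f_i$ near its singularity.

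Choose $\gamma$ as in (\ref{d2}) and set $p_i=(\gamma+\alpha_i)^{-1}$, $i=1,2$. Since $\alpha_1+\alpha_2<1/2$ we have $\gamma\in(0,1/4]$; hence $\gamma+\alpha_i<3/4<1$, so $p_i\ge1$, and $\gamma+\alpha_i>\alpha_i$, so $p_i<1/\alpha_i$ (which is what makes $f_i\in L^{p_i}$ admissible). Moreover $1/p_1+1/p_2=2\gamma+\alpha_1+\alpha_2=1/2$. Thus, once we know $f_1\in{\rm Lip}(p_1,\gamma)$ and $f_2\in{\rm Lip}(p_2,\gamma)$, Theorem~\ref{th02} applies verbatim (with $f=f_1$, $g=f_2$, $p=p_1$, $q=p_2$) and yields (\ref{t20}) with this $\gamma$.

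The one substantive step is the auxiliary fact: if $h$ is differentiable on $[-\pi,\pi]\setminus\{0\}$ with $|h(\lambda)|\le M|\lambda|^{-\alpha}$ and $|h'(\lambda)|\le M|\lambda|^{-\alpha-1}$ for some $\alpha\in(0,1)$, then $h\in{\rm Lip}(p,\,1/p-\alpha)$ for every $p$ with $1\le p<1/\alpha$. Applying this with $h=f_i$, $\alpha=\alpha_i$, $p=p_i$ gives precisely $f_i\in{\rm Lip}(p_i,\,1/p_i-\alpha_i)={\rm Lip}(p_i,\gamma)$. To prove the auxiliary fact I would bound $\|h(\cdot+t)-h(\cdot)\|_p^p$ by splitting the integral at $|\lambda|=2|t|$: on $\{|\lambda|\le2|t|\}$ use the crude bound $|h(\lambda+t)-h(\lambda)|\le M(|\lambda+t|^{-\alpha}+|\lambda|^{-\alpha})$, whose $p$-th power integrates to $O(|t|^{1-\alpha p})$ because $\alpha p<1$; on $\{2|t|<|\lambda|\le\pi\}$ the segment between $\lambda$ and $\lambda+t$ lies in $\{|\xi|\ge|\lambda|/2\}$, so the mean value theorem gives $|h(\lambda+t)-h(\lambda)|\le 2^{\alpha+1}M|t|\,|\lambda|^{-\alpha-1}$, and $|t|^p\int_{2|t|}^{\pi}\lambda^{-(\alpha+1)p}\,d\lambda=O(|t|^{1-\alpha p})$ because $(\alpha+1)p>1$. (The periodic wrap-around near $\pm\pi$ is harmless: there $h$ and $h'$ are bounded, contributing $O(|t|^p)$.) Taking $p$-th roots yields $\omega_p(h,\delta)=O(\delta^{1/p-\alpha})$, and since $1/p-\alpha\le1/p\le1$ this is an admissible Lipschitz order.

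I expect the only real work to be the bookkeeping in that auxiliary estimate — arranging the split so that both pieces produce the same power $|t|^{1-\alpha p}$, which is what pins the Lipschitz order to $1/p-\alpha$ — together with the observation that, under the sole constraint $1/p_1+1/p_2\le1/2$ of Theorem~\ref{th02}, the largest common order attainable with $1/p_i=\gamma+\alpha_i$ is exactly $\gamma=\frac14-\frac{\alpha_1+\alpha_2}{2}$; this optimization is the source of the value (\ref{d2}). As an alternative one could invoke Theorem~\ref{th01} directly: telescoping $\varphi({\bf u})-\varphi({\bf 0})$ (with $f=f_1$, $g=f_2$ in (\ref{in-6})) into three integrals, each carrying one difference $f_i(\lambda-u_j)-f_i(\lambda)$ and three pointwise-bounded factors, and applying the same near/away-from-$0$ split at scale $|\lambda|\sim|{\bf u}|$, verifies the H\"older hypothesis (\ref{t1}); but the route through Theorem~\ref{th02} is shorter and already delivers the stated rate, so I would present that one.
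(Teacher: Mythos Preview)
Your proposal is correct and matches the paper's own proof essentially line for line: the paper also chooses $1/p_i=\gamma+\alpha_i=\tfrac14+(\alpha_i-\alpha_j)/2$, invokes the auxiliary fact (stated there as a separate lemma, proved by the same split at $|\lambda|=2h$) to get $f_i\in{\rm Lip}(p_i,\gamma)$, and then applies Theorem~\ref{th02}. Your sketch of the auxiliary estimate and the verification that $1/p_1+1/p_2=1/2$ are the same as in the paper; the only extras in your write-up are the remark on why (\ref{d2}) is the optimal choice and the mention of the alternative route via Theorem~\ref{th01}, neither of which the paper includes.
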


\begin{exa}\label{exa1}
{\rm Let $f_i(\lambda)=|\lambda|^{-\alpha_i}$, $\lambda\in [-\pi,\pi],$
$i=1,2$, with $\alpha_1, \, \alpha_2>0$ and $\alpha_1+\alpha_2<1/2$.
It is easy to see that the conditions of Theorem \ref{NT} are satisfied,
and hence we have (\ref{t20}) with $\gamma $ as in (\ref{d2}).}
\end{exa}
\begin{exa}\label{exa2}
{\rm Let $f_i(\lambda)$, $\lambda\in [-\pi,\pi],$ $i=1,2$, be the spectral
density functions of two long-memory discrete-time stationary processes
given by
\begin{equation}\label{d20}
f_i(\lambda)=\frac{\sigma^2_i}{2\pi}|1-e^{i\lambda}|^{-\alpha_i}
\end{equation}
with $0<\sigma^2_i<\infty$, $\alpha_i>0$, $i=1,2$, and
$\alpha_1+\alpha_2<1/2$.
Then (\ref{t20}) holds with $\gamma $ as in (\ref{d2}).

Indeed, assuming  that $\lambda\in (0,\pi]$
(the case $\lambda\in [-\pi,0)$ is treated similarly), and taking into account $|1-e^{i\lambda}|=2\sin(\lambda/2)$, we have for $i=1,2$
$$
f_i(\lambda)= \frac{\sigma^2_i}{2\pi} \cdot 2^{-\alpha_i }\left[\sin\frac \lambda 2\right]^{-\alpha_i}
$$
and
$$
f_i^\prime(\lambda)=\frac{\sigma^2_i}{2\pi} \cdot \left[-\alpha_i2^{-\alpha_i-1}
\left(\sin\frac \lambda 2\right)^{-\alpha_i-1}\cos\frac \lambda 2\right].
$$
It is clear that the conditions of Theorem \ref{NT} are satisfied
with  $M_{1i}=M_{2i} =\sigma_i^2$,   $i=1,2$,
and the result follows.}
\end{exa}

\begin{rem}
{\rm It is easy to see that under the conditions of Theorem (B2)
we have $f\in{\rm Lip}(p, 1)$  and $g\in {\rm Lip}(p, 1)$ for any $p\ge 1$.
Hence Theorem \ref{th02} implies Theorem (B2) (for $\nu=2$).}
\end{rem}

\begin{rem}
{\rm For functions $f_i(\lambda)$, $i=1,2$, defined by
(\ref{d20}) an explicit second-order expansion for $S_{n,1}$
(see (\ref{in-2})) was found by \cite{LP},
where they showed that in this special case the second-order
expansion removes the singularity in the first-order
approximation, and provides an improved approximation
of order $\gamma = 1-2({\alpha_1+\alpha_2})$.}
\end{rem}

\section{Proofs}

We first state a number of lemmas.
The results of the first two lemmas are known (see, e.g., 
\cite{GKSu}, p. 8, 161).

\begin{lem}
\label{lem01} Let $D_n(u)$ be the Dirichlet kernel
\begin{equation}
\label{a-2} D_n(u)=\frac{\sin(nu/2)}{\sin (u/2)}.
\end{equation}
Then, for any $\delta\in[0,1]$ and $u\in \mathbb{T}$
\begin{equation} \label{01}
|D_n(u)|\leq  \pi\,{n^\delta}{|u|^{\delta-1}}.
\end{equation}
\end{lem}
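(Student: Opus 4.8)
The plan is to derive the estimate by interpolating between two elementary pointwise bounds for $D_n$ and then using the constraint $\delta\in[0,1]$ to absorb the constant. First I would record the trivial bound $|D_n(u)|\le n$. Setting $t=u/2$, the inequality $|\sin(nt)|\le n\,|\sin t|$ holds for all $t\in\mathbb R$ and all $n\in\mathbb{N}$; it follows by induction on $n$ from the addition formula $\sin((k+1)t)=\sin(kt)\cos t+\cos(kt)\sin t$ together with $|\cos t|\le1$. Dividing by $|\sin t|$ gives $|D_n(u)|\le n$ whenever $\sin(u/2)\ne0$, and the value $D_n(0)=n$ makes it hold everywhere.

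Second I would record the bound $|D_n(u)|\le \pi|u|^{-1}$ for $u\in\mathbb T$, $u\ne0$. Since $|\sin(nu/2)|\le1$ we have $|D_n(u)|\le 1/|\sin(u/2)|$, and by concavity of $\sin$ on $[0,\pi/2]$ one has $\sin x\ge (2/\pi)x$ there; applying this with $x=|u|/2\le\pi/2$ yields $|\sin(u/2)|\ge|u|/\pi$, hence $|D_n(u)|\le\pi/|u|$.

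Finally, for $\delta\in(0,1)$ and $u\ne0$ I would factor $|D_n(u)|=|D_n(u)|^{\delta}\,|D_n(u)|^{1-\delta}$ and apply the first bound to the first factor and the second bound to the second factor:
$$
|D_n(u)|\le n^{\delta}\Bigl(\frac{\pi}{|u|}\Bigr)^{1-\delta}=\pi^{1-\delta}\,n^{\delta}\,|u|^{\delta-1}\le\pi\,n^{\delta}\,|u|^{\delta-1},
$$
the last step using $\pi^{1-\delta}\le\pi$ because $\pi>1$ and $0\le1-\delta\le1$. The endpoint cases are immediate: $\delta=0$ is precisely the second bound, and for $\delta=1$ the asserted right-hand side is $\pi n\ge n\ge|D_n(u)|$; the excluded point $u=0$ requires nothing, since then $|u|^{\delta-1}=+\infty$ for $\delta<1$ while $|D_n(0)|=n\le\pi n$ for $\delta=1$. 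There is no genuine obstacle here: the only steps needing a line of justification are the two auxiliary inequalities $|\sin(nt)|\le n|\sin t|$ and $\sin x\ge(2/\pi)x$ on $[0,\pi/2]$, both standard.
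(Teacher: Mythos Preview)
Your argument is correct: the two elementary bounds $|D_n(u)|\le n$ and $|D_n(u)|\le \pi/|u|$ on $\mathbb{T}\setminus\{0\}$ are established exactly as you say, and the interpolation $|D_n(u)|=|D_n(u)|^{\delta}|D_n(u)|^{1-\delta}$ together with $\pi^{1-\delta}\le\pi$ gives the stated inequality. The paper itself does not supply a proof of this lemma; it simply records the result as known and refers to \cite{GKSu}, so there is nothing to compare against---your proof is the standard one and fills in what the paper omits.
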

\begin{lem}
\label{lem1}Let $0<\beta<1$, $0<\alpha<1$, and
$\alpha+\beta>1$. Then for any \ $y\in \mathbb R,\ y\neq0$,
\begin{equation}
\label{a-4}
\int_\mathbb R \frac1{|x|^\alpha|x+y|^\beta}dx = \frac M
{|y|^{\alpha+\beta-1}},
\end{equation}
where $M$ is a constant depending on $\alpha$ and $\beta$.
\end{lem}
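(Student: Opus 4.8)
The plan is to prove the identity by a scaling (homogeneity) argument combined with a reflection symmetry, after first checking that the integral converges. Write $I(y)$ for the left-hand side and set $M:=I(1)=\int_{\mathbb R}|t|^{-\alpha}|t+1|^{-\beta}\,dt$. The first step is to verify that $0<M<\infty$. The integrand has two possible singularities, at $t=0$ and at $t=-1$, and one must also control the behaviour as $|t|\to\infty$. Near $t=0$ the factor $|t+1|^{-\beta}$ is bounded, so the integrand is comparable to $|t|^{-\alpha}$, which is integrable precisely because $\alpha<1$; symmetrically, near $t=-1$ it is comparable to $|t+1|^{-\beta}$, integrable because $\beta<1$; and as $|t|\to\infty$ it is comparable to $|t|^{-(\alpha+\beta)}$, integrable because $\alpha+\beta>1$. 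Splitting $\mathbb R$ into small neighbourhoods of $0$ and of $-1$, a neighbourhood of infinity, and a compact remainder on which the integrand is continuous and bounded, makes each of these estimates rigorous, so $M$ is a finite positive constant depending only on $\alpha$ and $\beta$.

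Next I would establish the homogeneity. For $y>0$ substitute $x=yt$, $dx=y\,dt$; then $|x|^\alpha=y^\alpha|t|^\alpha$ and $|x+y|^\beta=y^\beta|t+1|^\beta$, so
\begin{equation*}
I(y)=\int_{\mathbb R}\frac{y\,dt}{y^{\alpha+\beta}|t|^\alpha|t+1|^\beta}
= y^{1-\alpha-\beta}\,M=\frac{M}{y^{\alpha+\beta-1}},
\end{equation*}
which is exactly the claimed formula for $y>0$. To pass to $y<0$ I would use the reflection $x\mapsto -x$, which gives $I(y)=\int_{\mathbb R}|x|^{-\alpha}|x-y|^{-\beta}\,dx=I(-y)$; hence $I$ is even and $I(y)=I(|y|)=M\,|y|^{-(\alpha+\beta-1)}$ for every $y\neq 0$. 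Combining the two cases yields the asserted identity with the single constant $M=M(\alpha,\beta)$.

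There is no serious obstacle here: the entire argument rests on the scale invariance of Lebesgue measure under the dilation $x=yt$, and the three hypotheses $\alpha<1$, $\beta<1$, $\alpha+\beta>1$ enter only in the convergence check of the first step, each inequality being responsible for integrability at one of the three critical regions $t=0$, $t=-1$, $|t|=\infty$. If an explicit value of $M$ were wanted, splitting $\mathbb R$ into the ranges $t>0$, $-1<t<0$, $t<-1$ and applying, on each piece, the substitution that reduces it to an Euler Beta integral would express $M$ as a finite sum of Beta-function values; but for the statement as worded it suffices to record that $M$ is finite, positive, and depends only on $\alpha$ and $\beta$.
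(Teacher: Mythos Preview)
Your proof is correct. The paper does not actually prove this lemma; it merely cites it as known (referring to \cite{GKSu}), so there is no in-paper argument to compare against. The scaling/reflection argument you give is the standard route to such homogeneity identities, and your convergence check correctly identifies how each of the three hypotheses $\alpha<1$, $\beta<1$, $\alpha+\beta>1$ is used.
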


\begin{lem}
\label{lem2} Let $0<\alpha\le 1$ and $\frac
2 3<\beta<\frac{\alpha+3}4$. Then
\begin{equation}
\label{a-05}
B_i:=\int_{\mathbb T^3}
\frac{|u_i|^\alpha}{|u_1u_2u_3(u_1+u_2+u_3)|^\beta} \,
du_1du_2du_3<\infty,
\quad i=1,2,3.
\end{equation}
\end{lem}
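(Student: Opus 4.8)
The plan is to integrate out two of the three variables one at a time, applying Lemma~\ref{lem1} at each step, and then to check that the remaining one-dimensional integral converges. Since permuting the coordinates leaves both $\mathbb T^3$ and the denominator $|u_1u_2u_3(u_1+u_2+u_3)|^\beta$ unchanged, we have $B_1=B_2=B_3$, so it suffices to bound one of them, say $B_1$. The integrand is nonnegative and measurable, so by Tonelli's theorem we may iterate the integration in any order, and it is enough to exhibit a finite upper bound.

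First I would fix $u_1$ and $u_3$ and integrate in $u_2$. The part of the denominator involving $u_2$ is $|u_2|^\beta\,|u_2+(u_1+u_3)|^\beta$, and the hypotheses $\tfrac23<\beta<\tfrac{\alpha+3}4\le1$ give $0<\beta<1$ together with $2\beta>1$ (since $\beta>\tfrac12$). Hence Lemma~\ref{lem1}, applied with both exponents equal to $\beta$, yields for a.e. $(u_1,u_3)$
$$
\int_{\mathbb T}\frac{du_2}{|u_2|^\beta\,|u_2+(u_1+u_3)|^\beta}\le
\int_{\mathbb R}\frac{du_2}{|u_2|^\beta\,|u_2+(u_1+u_3)|^\beta}=\frac{M}{|u_1+u_3|^{2\beta-1}},
$$
so that $B_1\le M\displaystyle\int_{\mathbb T^2}\frac{|u_1|^{\alpha-\beta}}{|u_3|^\beta\,|u_1+u_3|^{2\beta-1}}\,du_1\,du_3$.

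Next I would integrate in $u_3$, with $u_1$ fixed. Now the exponents are $\beta$ and $2\beta-1$; from $\tfrac23<\beta<1$ we get $0<2\beta-1<1$ and, decisively, $\beta+(2\beta-1)=3\beta-1>1$ because $\beta>\tfrac23$. Lemma~\ref{lem1} then gives $\int_{\mathbb T}|u_3|^{-\beta}\,|u_3+u_1|^{-(2\beta-1)}\,du_3\le M|u_1|^{-(3\beta-2)}$, hence
$$
B_1\le M\int_{\mathbb T}|u_1|^{\alpha-\beta-(3\beta-2)}\,du_1=M\int_{\mathbb T}|u_1|^{\alpha-4\beta+2}\,du_1.
$$
This integral over $\mathbb T=[-\pi,\pi]$ is finite exactly when $\alpha-4\beta+2>-1$, i.e. when $\beta<\tfrac{\alpha+3}4$, which holds by assumption; thus $B_1<\infty$, and by symmetry $B_2=B_3<\infty$ as well.

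There is no real obstacle here, only bookkeeping: one must verify that the exponents stay in the admissible range of Lemma~\ref{lem1} at both steps. The bound $\beta>\tfrac12$ (automatic from $\beta>\tfrac23$) legitimizes the first application, the sharper bound $\beta>\tfrac23$ is precisely what makes $3\beta-1>1$ in the second application, the upper bound $\beta<\tfrac{\alpha+3}4$ is exactly the convergence condition for the final one-dimensional integral, and the hypothesis $\alpha\le1$ enters only through $\beta<1$.
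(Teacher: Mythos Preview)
Your argument is correct and follows essentially the same route as the paper: integrate out two variables successively using Lemma~\ref{lem1} (the paper does $u_3$ then $u_2$ instead of your $u_2$ then $u_3$, which is immaterial), then check that the remaining one-dimensional integral in $u_1$ converges precisely when $\beta<(\alpha+3)/4$. Your use of the permutation symmetry to reduce to a single $B_i$ is a small streamlining over the paper's ``the other cases are similar''.
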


\begin{proof}
Using Lemma \ref{lem1} we can write
\begin{eqnarray*}
B_1&\leq&\int_{|u_1|\leq \pi}\frac1{|u_1|^{\beta-\alpha}}
\int_{\mathbb R}\frac1{|u_2|^\beta}\int_{\mathbb R}\frac1{|u_3(u_1+u_2+u_3)|^\beta} \,
du_3du_2du_1\\
&=& M\int_{|u_1|\leq\pi}\frac1{|u_1|^{\beta-\alpha}}
\int_{\mathbb R}\frac1{|u_2|^\beta|u_1+u_2|^{2\beta-1}}du_2du_1\\
&=&
M^2\int_{|u_1|\leq\pi}\frac1{|u_1|^{4\beta-\alpha-2}}du_1<\infty,
\end{eqnarray*}
yielding (\ref{a-05}) for $i=1$.
The quantities $B_2$ and $B_3$ can be estimated in the same way.
\end{proof}

\begin{lem}\label{lem4}
Let $p>1$ and $0<\alpha <1$ be such that $\alpha p<1$, and for
some constants $M_1, M_2>0$
\begin{equation}\label{l4-01}
|f(\lambda)|\le M_1|\lambda|^{-\alpha},\quad
|f^\prime(\lambda)|\le M_2|\lambda|^{-(\alpha+1)},\quad \lambda\in [-\pi,\pi],
\quad \lambda\ne0.
\end{equation}
Then $f\in Lip(p,1/p-\alpha)$.
\end{lem}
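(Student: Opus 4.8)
The plan is to estimate the $L^p$-modulus of continuity $\omega_p(f,\delta)$ directly from the pointwise bounds \eqref{l4-01}, by splitting the domain of integration near the singularity at $0$. Fix $0<h\le\delta$; I want to bound $\|f(\cdot+h)-f(\cdot)\|_p$ by $C\delta^{1/p-\alpha}$. Write $\mathbb T=E_1\cup E_2$ where $E_1=\{\lambda:\ |\lambda|\le 2h\}$ (a neighbourhood of the singular point, and also capturing the point where the shifted singularity sits) and $E_2=\{\lambda\in\mathbb T:\ |\lambda|>2h\}$. On $E_1$ I will use the crude bound $|f(\lambda+h)-f(\lambda)|\le |f(\lambda+h)|+|f(\lambda)|\le M_1(|\lambda+h|^{-\alpha}+|\lambda|^{-\alpha})$ together with the fact that $\lambda^{-\alpha}\in L^p$ locally (since $\alpha p<1$): integrating, $\int_{E_1}|f(\lambda+h)-f(\lambda)|^p\,d\lambda \le C\int_{|\lambda|\lesssim h}|\lambda|^{-\alpha p}\,d\lambda = C\,h^{1-\alpha p}$, which is exactly $(C\,h^{1/p-\alpha})^p$, as required.

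On $E_2$ the function is differentiable along the whole segment $[\lambda,\lambda+h]$ (for $h$ small, this segment avoids $0$ and stays where $|t|\ge |\lambda|/2$, say, using $|\lambda|>2h$), so by the mean value theorem $|f(\lambda+h)-f(\lambda)|\le h\,\sup_{t\in[\lambda,\lambda+h]}|f'(t)|\le h\,M_2\,(|\lambda|/2)^{-(\alpha+1)}=C\,h\,|\lambda|^{-(\alpha+1)}$. Hence
\begin{equation*}
\int_{E_2}|f(\lambda+h)-f(\lambda)|^p\,d\lambda\le C\,h^p\int_{2h<|\lambda|\le\pi}|\lambda|^{-(\alpha+1)p}\,d\lambda.
\end{equation*}
Since $(\alpha+1)p>1$ (because $p>1$ and $\alpha>0$), the last integral is dominated by $C\,(2h)^{1-(\alpha+1)p}=C\,h^{1-(\alpha+1)p}$, so the whole contribution is $\le C\,h^p\cdot h^{1-(\alpha+1)p}=C\,h^{1-\alpha p}=(C\,h^{1/p-\alpha})^p$. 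Combining the two pieces, $\|f(\cdot+h)-f(\cdot)\|_p\le C\,h^{1/p-\alpha}\le C\,\delta^{1/p-\alpha}$ for all $0<h\le\delta$, whence $\omega_p(f,\delta)=O(\delta^{1/p-\alpha})$, i.e. $f\in{\rm Lip}(p,1/p-\alpha)$. Note $1/p-\alpha\in(0,1]$: it is positive since $\alpha p<1$, and it is $\le 1$ since $p\ge1$.

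The only delicate point is the geometric setup near the singularity: one must make sure that on $E_2$ the entire segment $[\lambda,\lambda+h]$ stays bounded away from $0$ (comparably to $|\lambda|$) so that the derivative bound applies uniformly along it, and that on $E_1$ the shifted point $\lambda+h$ also lies in a fixed neighbourhood of $0$ so the local $L^p$ integrability of $|\cdot|^{-\alpha}$ can be invoked — both are handled by the factor $2$ in the definition of $E_1,E_2$ and by taking $\delta$ (hence $h$) small, the case of large $\delta$ being trivial since $\omega_p(f,\delta)\le 2\|f\|_p$ is bounded. Everything else is a routine power-function integration.
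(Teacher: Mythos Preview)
Your proof is correct and follows essentially the same route as the paper's: split at $|\lambda|=2h$, use the pointwise bound $|f|\le M_1|\lambda|^{-\alpha}$ and local $L^p$-integrability on $E_1$, and the mean value theorem together with $|f'|\le M_2|\lambda|^{-(\alpha+1)}$ on $E_2$, both pieces yielding $Ch^{1-\alpha p}$. You are in fact a bit more careful than the paper in justifying that the segment $[\lambda,\lambda+h]$ stays in $\{|t|\ge|\lambda|/2\}$ on $E_2$ and in disposing of large $\delta$ via $\omega_p(f,\delta)\le 2\|f\|_p$.
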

\begin{proof}[Proof]
Let $h\in (0,1)$ be fixed. Then
\begin{equation}\label{l4-1}
\int_{|\lambda|\le2h}|f(\lambda+h)-f(\lambda)|^pd\lambda  \le (2M_1)^p \int_0^{3h}
\lambda^{-p\alpha} d\lambda \le C h^{1-p\alpha}.
\end{equation}
Next, for $|\lambda|>2h$ we have with some $\xi\in(\lambda,\lambda+h)$
$$
|f(\lambda+h)-f(\lambda)|=|f^\prime(\xi)\cdot h|
\le M_2 h|\xi|^{-(\alpha+1)}.
$$
Hence
\begin{equation}\label{l4-2}
\int_{2h<|\lambda|<\pi}|f(\lambda+h)-f(\lambda)|^pd\lambda  \le C h^p\int_h^\pi
\lambda^{-p(\alpha+1)} d\lambda \le C h^{1-p\alpha}.
\end{equation}
From (\ref{l4-1}) and (\ref{l4-2}) we get
$$
\|f(\lambda+h)-f(\lambda)\|_p\le Ch^{1/p-\alpha},
$$
showing that $f\in Lip(p, 1/p-\alpha)$.
\end{proof}

\begin{proof}[Proof of Theorem \ref{th01}]
Denote
\begin{equation}
\label{a-1} \Phi_n({\bf u}):=\Phi_n(u_1, u_2, u_3)=\frac1{8\pi^3n}\cdot
D_n(u_1)D_n(u_2)D_n(u_3)D_n(u_1+u_2+u_3)
\end{equation}
 and
\begin{equation}
\label{w1} \Psi({\bf u}):=\Psi(u_1,u_2,u_3)
= \varphi(u_1,u_1+u_2,u_1+u_2+u_3),
\end{equation}
where $D_n(u)$ and $\varphi(u_1,u_2,u_3)$ are defined by (\ref{a-2})
and (\ref{in-6}), respectively.
Then (\cite{GS1})
\begin{eqnarray}
\label{a-3}
\nonumber
\Delta_{n,2}&=&\left|\frac1n \, \hbox{tr}[B_n(f)B_n(g)]^2
-8\pi^3\int_{\mathbb T}f^2(\lambda)g^2(\lambda)\,d\lambda\right| \\
&=&
\left |\int_{\mathbb T^3}[\Psi({\bf u})-\Psi({\bf 0})]
\Phi_n({\bf u})d{\bf u}\right|,\quad {\bf 0}=(0,0,0).
\end{eqnarray}
It follows from (\ref{t1}) and (\ref{w1}) that
\begin{equation}
\label{w2}
|\Psi({\bf u})-\Psi({\bf 0})|\leq 3C|u_1|^\gamma +2C|u_2|^\gamma+C|u_3|^\gamma,
\quad {\bf u}=(u_1, u_2, u_3)\in \mathbb{T}^3.
\end{equation}

\noindent
Let $\varepsilon\in (0,\gamma)$. Then, applying
Lemma \ref{lem01} with $\delta=\frac{1+\varepsilon-\gamma}4$,
and using (\ref{a-3}) and (\ref{w2}), we have
\begin{eqnarray*}
|\Delta_{n,2}|&\leq&
\int_{\mathbb T^3}|\Psi({\bf u})-\Psi({\bf 0})||\Phi_n({\bf u})|d{\bf u}
\\
&\leq& \frac2{n^{1-4\delta}} \sum\limits _{i=1}^3 C_i\int_{\mathbb T^3}
\frac{|u_i|^\gamma}{|u_1u_2u_3(u_1+u_2+u_3)|^{1-\delta}} \, du_1du_2du_3.
\end{eqnarray*}
This, combined with Lemma \ref{lem2} implies
the statement of Theorem \ref{th01}.
\end{proof}

\begin{proof}[Proof of Theorem \ref{th02}]
According to Theorem \ref{th01} it is enough to prove that the
function
\begin{equation} \label {c-2-0}
\varphi ({\bf t}):=\int_\mathbb T
h_0(u)h_1(u-t_1)h_2(u-t_2)h_3(u-t_3)du, \quad {\bf
t}=(t_1,t_2,t_3)\in\mathbb T^3
\end{equation}
with some positive constant $C$ satisfies the condition
\begin{equation}\label{lip1}
|\varphi ({\bf t})-\varphi({\bf 0})|\leq C|{\bf t}|^\gamma,
\quad {\bf t}=(t_1,t_2,t_3)\in \mathbb T^3,
\end{equation}
provided that
\begin{equation}\label{c-2-1}
h_i\in {\rm Lip}(p_i, \gamma), \quad 1\leq p_i \leq\infty,\quad
i=0,1,2,3,\quad {\rm and} \quad  \sum_{i=0}^3 \frac1{p_i}\leq 1.
\end{equation}
To prove (\ref{lip1}) we fix ${\bf t}=(t_1,t_2,t_3)\in\mathbb T^3$ and denote
\begin{equation}\label{c-2-2}
\overline{h}_i(u)=h_i(u-t_i)-h_i(u),\quad i=1,2,3.
\end{equation}
Since $h_i\in {\rm Lip}(p_i, \gamma)$ we have
\begin{equation}\label{c-2-3}
\|\overline{h}_i\|_{p_i}\leq C_i|{\bf t}|^\gamma,\quad i=1,2,3.
\end{equation}
By (\ref{c-2-0}) and (\ref{c-2-2})
$$
\varphi ({\bf t})=\int_\mathbb{T}h_0(u)\prod_{i=1}^3
\left(\overline{h}_i(u)+h_i(u)\right)du=
\varphi ({\bf 0}) + W.
$$
Each of the seven integrals comprising $W$ contains
at least one function $\overline{h}_i$, and in view
of (\ref{c-2-3}), can be estimated as follows
$$ \left|\int_\mathbb{T}
h_0(u)\overline{h}_1(u)h_2(u)h_3(u)du\right|\leq
\|h_0\|_{L^{p_0}}\|\overline{h}_1\|_{L^{p_1}}\|h_2\|_{L^{p_2}}
\|h_3\|_{L^{p_3}}\leq C|{\bf t}|^\gamma.
$$
This completes the proof of Theorem \ref{th02}.
\end{proof}

\begin{proof}[Proof of Theorem \ref{NT}]
For given $\alpha_i>0$, $i=1,2$, satisfying
$\alpha_1+\alpha_2<1/2$ we set
$$
\frac1{p_1}=\frac14+\frac{\alpha_1-\alpha_2}2 \quad {\rm and} \quad
\frac 1{p_2}=\frac14+\frac{\alpha_2-\alpha_1}2.
$$
It is easy to check that such defined $p_1$ and $p_2$ satisfy the
conditions
 $$
 p_1>1, \ p_2>1, \ \alpha_1 p_1<1, \ \alpha_2 p_2<1, \quad {\rm and}
 \quad \frac1{p_1}+\frac1{p_2}=\frac12.
 $$
Hence, according to Lemma \ref{lem4},
$f_i\in {\rm Lip}(p_i, \gamma)$,
$i=1,2$,
with
$$
\gamma =\frac1{p_1}-\alpha_1 =\frac1{p_2}-\alpha_2
=\frac 14-\frac{\alpha_1+\alpha_2}2.
$$
Applying Theorem \ref{th02} with $p=p_1$, $q=p_2$, $f=f_1$ and $g=f_2$
we get (\ref{t20})
with $\gamma $ as in (\ref{d2}).
\end{proof}

\section*{Acknowledgment}

The research of Mamikon S. Ginovyan was partially supported by
NSF Grant \#DMS-0706786.
The authors would like to thank the Editor and the anonymous
referees for their careful review of the manuscript and valuable comments.

\bibliographystyle{elsarticle-harv}

\end{document}